\tikzstyle{legend_general}=[rectangle, rounded corners, thin,
\renewcommand{\epsilon}{\varepsilon}
\renewcommand{\phi}{\varphi}
\DeclareMathOperator{\dist}{dist}
\DeclareMathOperator{\HG}{HG}
\DeclareMathOperator{\cl}{cl}
\newtheorem{theorem}{Theorem}[section]
\newtheorem{question}[theorem]{Question}
\newtheorem{lemma}[theorem]{Lemma}
\theoremstyle{definition}
\author{Peter Bradshaw}
\address{Department of Mathematics, Simon Fraser University, Burnaby, Canada}
\email{pabradsh@sfu.ca}
\title{Hat guessing number and guaranteed subgraphs}
\begin{document}
\maketitle
\begin{abstract}
The \emph{hat guessing number} of a graph is a parameter related to the hat guessing game for graphs introduced by Winkler. In this paper, we show that graphs of sufficiently large hat guessing number must contain arbitrary trees and arbitrarily long cycles as subgraphs. More precisely, for each tree $T$, there exists a value $N = N(T)$ such that every graph that does not contain $T$ as a subgraph has hat guessing number at most $N$, and for each integer $c$, there exists a value $N' = N'(c)$ such that every graph with no cycle of length greater than $c$ has hat guessing number at most $N'$. 
\end{abstract}

\section{Introduction}
We consider the \emph{hat guessing game}, a game played on a finite graph $G$ between two parties. One party consists of a set of \emph{players}, with exactly one player occupying each vertex of $G$, and the other party consists of a single \emph{adversary}. A player at a vertex $v \in V(G)$ can see exactly those players at the neighbors of $v$, and importantly, no player can see himself. At the beginning of the game, the adversary gives each player a hat of some color chosen from the set $\{1, \dots, q\}$. Then, each player observes the hat colors of the players at neighboring vertices and guesses the color of his own hat. 
A player cannot hear the guesses of other players. The players win if at least one player correctly guesses the color of his hat; otherwise, the adversary wins. Before the game begins, the players communicate in order to devise a guessing strategy, but the players' guessing strategy is known to the adversary. It is assumed that the players always follow a deterministic guessing strategy, so that the guess of the player at each vertex $v$ is uniquely determined by the colors of the hats at the neighbors of $v$. 
If the players have a guessing strategy that guarantees at least one correct guess regardless of the adversary's hat assignment, then we say that the hat guessing number of $G$, written $\HG(G)$, is at least $q$. In other words, $\HG(G)$ is the maximum integer $q$ for which the players have a winning guessing strategy in the hat guessing game played on $G$ with $q$ hat colors. The hat guessing game was first considered by Winkler \cite{Winkler} with hats of only two colors, and the game was first considered in its full generality by Butler, Hajiaghayi, Kleinberg, and Leighton \cite{Butler}.

We give a short and classical example from Winkler \cite{Winkler} of a winning strategy for the players in the hat guessing game. Suppose that Alice and Bob occupy the two vertices of $K_2$ and play the hat guessing game 
 against an adversary who may assign red hats and blue hats. Before the game begins, Alice agrees to guess the color of the hat that she sees on Bob's head, and Bob agrees to guess the color opposite to the color of Alice's hat. Then, when the adversary assigns hats to Alice and Bob, Alice guesses correctly if the two hats assigned have the same color, and Bob guesses correctly if the two hats assigned have different colors. Hence, Alice and Bob have a winning strategy on $K_2$ against an adversary with two hat colors, implying that $\HG(K_2) \geq 2$. In fact, Winkler \cite{Winkler} showed that $\HG(K_2) = 2$, and more generally, Feige \cite{Feige} showed that $\HG(K_n) = n$ for all $n \geq 1$.

We may alternatively characterize the hat guessing game as a graph coloring problem, as follows. Let $G$ be a graph, and let $q$ be a positive integer. For each vertex $v \in V(G)$, let $\Gamma_v$ be a function that maps each (not necessarily proper) $q$-coloring of $N(v)$ to an element of the set $\{1, \dots, q\}$; that is, $\Gamma_v:[q]^{N(v)} \rightarrow [q]$. The goal in this problem is to give $G$ a (not necessarily proper) $q$-coloring $\varphi$ such that for each vertex $v \in V(G)$, $\Gamma_v$ does not map $\phi(N(v))$ to $\phi(v)$, where $\phi(N(v))$ is the coloring of $N(v)$ under $\varphi$. If such a $q$-coloring $\phi$ exists for each set of functions $\{\Gamma_v : v \in V(G)\}$, then we say that $\HG(G) < q$. It is easy to check that in the graph coloring setting, our $q$-coloring of $G$ corresponds to a hat assignment by the adversary in the game setting, and the functions $\Gamma_v$ correspond to the individual guessing strategies of the players. Due to the natural description of the hat guessing game as a graph coloring problem, we often identify a player with the vertex she occupies, and we often refer to a hat assignment as a graph coloring. 

%We find it helpful to give a brief example of the hat guessing game. We consider the graph $G = K_3$ with vertices $v_0, v_1, v_2$ and with color set $\{0,1,2\}$, and we write $\Gamma_i$ for the guessing function of $v_i$. (Note that using the color set $\{0,1,2\}$ is equivalent to using the color set $\{1,2,3\}$). For a $3$-coloring $\varphi$ of $G$, we define $\Gamma_0, \Gamma_1, \Gamma_2$ to be the solutions to the following congruences, modulo $3$:
%\begin{eqnarray*}
%\Gamma_0 + \varphi(v_1) + \varphi(v_2) &\equiv& 0 \\
%\varphi(v_0) + \Gamma_1 + \varphi(v_2) &\equiv& 1 \\
%\varphi(v_0) + \varphi(v_1) + \Gamma_2 &\equiv& 2.
%\end{eqnarray*}
%Since $\varphi(v_0) + \varphi(v_1) + \varphi(v_2)$ must take one of the values $0,1,2$ modulo $3$, it follows that $\Gamma_i$ must take the value $\varphi(v_i)$ for some $i \in \{0,1,2\}$. Therefore, no coloring of $G$ with $\{0,1,2\}$ causes each function $\Gamma_i$ to take a different value than $\varphi(v_i)$, and thus $\HG(G) \geq 3$. In fact, a simple probabilistic argument shows that $\HG(G) = 3$, and more generally, for all $n \geq 1$, $\HG(K_n) = n$. 

Determining bounds for the hat guessing numbers of large graph classes is a particularly difficult problem.
Constant upper bounds are known for the hat guessing numbers of graphs belonging to certain restricted graph classes, such as trees \cite{Butler}, graphs of bounded degree \cite{Farnik}, and graphs of bounded treedepth \cite{HeDegeneracy}. It has been asked frequently whether the hat guessing number of planar graphs is bounded above by a constant \cite{Jaroslaw, HeWindmill}, and stronger still, whether all $k$-degenerate graphs have hat guessing number bounded above by a function of $k$ \cite{Farnik, HeDegeneracy}. While both of these questions are still open, some progress was recently made toward the first question when the author \cite{PBhats} showed that outerplanar graphs have hat guessing number less than $2^{250000}$. This upper bound for planar graphs has since been dramatically reduced to $40$ by Knierim, Martinsson, and Steiner \cite{Steiner}, who considered the more general class of \emph{strongly degenerate} graphs.

In this paper, we present two more graph classes with bounded hat guessing number. We show that those graphs with a certain forbidden tree subgraph also have a constant upper bound for their hat guessing numbers, as do those graphs whose cycles are of bounded length.
 Equivalently, we may say that in a graph family $\mathcal G$ whose members have unbounded hat guessing number, every tree appears as a subgraph of some graph $G \in \mathcal G$, and for each integer $c \geq 3$, a cycle of length at least $c$ also appears as a subgraph of some graph $G \in \mathcal G$.

\begin{theorem}
\label{thm:main}
Let $T$ be a fixed tree. Then, there exists a value $N = N(T)$ such that every graph $G$ with no subgraph isomorphic to $T$ satisfies $\HG(G) \leq N$.
\end{theorem}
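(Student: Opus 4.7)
The plan is to prove the contrapositive: show that if $\HG(G) > N$ for an appropriate threshold $N = N(T)$, then $G$ must contain $T$ as a subgraph. I would proceed by induction on $|V(T)|$, with the base cases $|V(T)| \leq 2$ being immediate.

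To make the induction carry through, I would strengthen the hypothesis as follows: for every tree $T$ and every integer $s \geq 0$, there exists a value $N^*(T, s)$ such that any graph $G$ with $\HG(G) \geq N^*(T, s)$ admits an embedding $\phi: V(T) \to V(G)$ in which every image vertex $\phi(u)$ has at least $s$ neighbors in $V(G) \setminus \phi(V(T))$ (call these the \emph{private} neighbors of $\phi(u)$). The theorem corresponds to the case $s = 0$. The base case $T = K_1$ reduces to producing a vertex of degree at least $s$ in $G$, which is immediate from Farn\'ik's bound of $\HG(G)$ in terms of $\Delta(G)$.

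For the inductive step, pick a leaf $\ell$ of $T$ attached to $v \in V(T)$ and let $T' = T - \ell$. Applying the strengthened inductive hypothesis to $T'$ with a parameter $s' \gg s$ yields an embedding of $T'$ into $G$ in which each image vertex has at least $s'$ private neighbors. I would then designate one private neighbor of $\phi(v)$ --- chosen to additionally have large degree in $G$, so that it inherits its own private neighborhood --- as the image of $\ell$. This produces an embedding of $T$ preserving the private-neighbor condition at every image vertex, including the new vertex $\phi(\ell)$, provided $s'$ is sufficiently large relative to $s$ and $|V(T)|$.

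The main obstacle is ensuring that one can always select a private neighbor of $\phi(v)$ with enough ``depth'', i.e.\ with many private neighbors of its own. I would address this by further strengthening the inductive hypothesis to require nested private neighborhoods propagated along the branches of $T$, so that the neighborhood structure shadows the depth of $T$ itself. A deeper conceptual difficulty is that excluding $T$ only guarantees $(|V(T)|-2)$-degeneracy of $G$, which alone is not known to bound $\HG$; thus the tree structure of $T$ --- its branching and depth --- must be genuinely exploited throughout the argument, rather than reducing everything to a degeneracy-based bound.
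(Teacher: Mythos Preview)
Your scheme has a real gap, and you have already put your finger on it: the private-neighbor strengthening does not propagate. After embedding $T' = T - \ell$ with $s'$ private neighbors at $\phi(v)$, nothing prevents all of those private neighbors from being leaves of $G$, so there may be no candidate for $\phi(\ell)$ with $s$ private neighbors of its own. Farn\'ik's bound tells you only that \emph{some} vertex of $G$ has large degree; it gives you no control over the degrees of the specific vertices sitting in $N(\phi(v))$. Your proposed fix --- demanding ``nested private neighborhoods'' along the branches of $T$ --- once made precise is the assertion that large $\HG(G)$ forces $G$ to contain a complete $t$-ary tree of prescribed height. That is a perfectly good reduction (indeed it is how the paper begins), but it is at least as hard as the original theorem, and your induction on $|V(T)|$ together with Farn\'ik's bound alone does not prove it: removing a single leaf of $T$ does not decrease the depth of nesting required, so the induction does not terminate.

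What is missing is a way to \emph{remove} low-degree vertices from $G$ while keeping $\HG$ large. The paper supplies exactly this: if $U$ is an independent set whose vertices all have degree at most $r$, then $\HG(G) \leq (\HG(G\setminus U)+1)^r$. Combined with the classical fact that forbidding a tree on $k$ vertices forces $\chi(G) < k$, one strips all vertices of degree below $k := 2t^h$ in fewer than $k$ independent rounds, and the remaining graph $G\setminus U$ cannot contain a $t$-ary tree of height $h-1$ (else its leaves, each of degree $\geq k$ in $G$, could be greedily extended to a $t$-ary tree of height $h$). This sets up a clean induction on the \emph{height} $h$ rather than on $|V(T)|$. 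Your outline never invokes any mechanism of this kind, and without it the argument cannot close.
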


\begin{theorem}
\label{thm:circ}
If a graph $G$ has no cycle of length greater than $c$, then 
\[\HG(G) < \left ( \frac{64}{25} \right )^{2^{\lfloor \frac{1}{2}c^2 \rfloor -1}} + \frac{1}{2}.\]
\end{theorem}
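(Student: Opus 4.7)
The plan is to argue by induction on the circumference $c$. A direct forbidden-subgraph argument via Theorem~\ref{thm:main} is unavailable here, since a graph of bounded circumference can contain arbitrarily large trees as subgraphs. For the base case $c = 2$, the graph $G$ is a forest and $\HG(G)$ is bounded by the constant-size bound for trees established by Butler, Hajiaghayi, Kleinberg, and Leighton. For the inductive step, a standard block decomposition reduces to the case where $G$ is $2$-connected, since $\HG$ of a graph with a cut vertex is controlled (up to small factors) by the maximum $\HG$ over its $2$-connected blocks, each of which still has circumference at most $c$.

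Assume then that $G$ is $2$-connected with circumference at most $c$. The structural input I would use is an ear decomposition of $G$: the circumference bound forces every ear to have length at most $c - 1$, so there is a ``peripheral'' path of length at most $c - 1$ whose internal vertices have degree $2$ in $G$, and whose contraction (or removal) yields a smaller $2$-connected graph $G'$ of strictly smaller circumference. Combined with the Erd\H{o}s--Gallai edge bound, which guarantees a vertex of degree at most $c - 1$, this supplies the local structure for the reduction step.

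The heart of the proof is a hat-guessing reduction: from a winning strategy on $G$ using $q$ colors, construct a winning strategy on roughly $q^{1/\alpha(c)}$ colors for $G'$, for some $\alpha(c)$ of order $2^{c}$. Intuitively, one fixes the hat colors along the chosen short ear and aggregates over the $O(q^{c})$ such patterns; by pigeonhole, some fixed pattern must retain enough of the strategy on the contracted graph to form a winning strategy there. Iterating the reduction $O(c)$ times drops the circumference to the forest base case, and the telescoping product of per-step exponents $\prod_{k=2}^{c} 2^{k}$ produces the doubly exponential factor $2^{\lfloor c^2/2 \rfloor - 1}$ in the stated bound. The specific base $(8/5)^{2} = 64/25$ presumably arises as the sharp $\HG$ threshold of a small ``bad'' gadget (perhaps a short cycle or book) that must be avoided at the innermost level of the recursion.

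The main obstacle will be to ensure the reduction genuinely decreases the circumference while retaining enough of the winning structure. Contracting an ear shortens every cycle through it, but when the ear lies on the \emph{longest} cycle of $G$, one must verify that no new cycle of length exceeding $c - 1$ is created in $G'$; simultaneously, the pigeonhole aggregation over fixed patterns must yield a deterministic strategy that works against every residual adversary, not merely against one specific coloring. Carefully balancing these two constraints, and certifying that the per-step color loss is truly of order $q^{1/2^{c}}$ rather than something coarser, is where the detailed combinatorics of short-ear decompositions in $2$-connected graphs of bounded circumference will enter.
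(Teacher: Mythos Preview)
Your proposal has a genuine structural gap. The key claim---that removing or contracting a short ear from a $2$-connected graph of circumference $c$ yields a graph of \emph{strictly smaller} circumference---is false. Consider the theta graph consisting of three internally disjoint paths of length $k$ between two vertices: its circumference is $2k$, and removing any one of the three paths (an ear) leaves a cycle of length $2k$. More generally, a $2$-connected graph of circumference $c$ may contain many cycles of length $c$, and peeling off a single ear need not touch all of them. So your induction on $c$ never gets off the ground, and the telescoping product $\prod_{k=2}^{c}2^{k}$ that you use to manufacture the exponent $2^{\lfloor c^{2}/2\rfloor-1}$ has no structural basis. You also gloss over the block-decomposition step: controlling $\HG(G)$ by the maximum of $\HG$ over blocks is not immediate, and the paper needs the two-guess variant $\HG_2$ on each block to make the gluing work (Lemmas~\ref{lem:2}--\ref{lem:blocks}).

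The paper's route is quite different from yours. After reducing to blocks via $\HG_2$, it does \emph{not} induct on circumference at all. Instead it bounds the \emph{treedepth} of each $2$-connected block: Dirac's lemma (Lemma~\ref{lem:Dirac}) says that a $2$-connected graph of circumference $c$ has no path of length $\geq \tfrac{1}{2}c^{2}$, so a DFS tree has height at most $\tfrac{1}{2}c^{2}-1$, giving treedepth at most $\lfloor \tfrac{1}{2}c^{2}\rfloor$. This is where the $\tfrac{1}{2}c^{2}$ in the exponent actually comes from---it is a path-length bound, not an iterated ear count. The bound on $\HG_2$ for graphs of treedepth $d$ then follows He and Li's argument verbatim, using the recursion $a_{n+1}=1+2\prod_{i\le n}a_i$, which satisfies $a_n\le \theta^{2^{n-1}}+\tfrac{1}{2}$ for a constant $\theta\approx 2.5533$. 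The base $64/25=2.56$ is nothing more than a convenient rational number slightly larger than $\theta$; it is not the hat guessing threshold of any gadget.
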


Theorems \ref{thm:main} and \ref{thm:circ} are analogues of the well-known facts that if a graph has a forbidden tree subgraph or only has cycles of bounded lengths, then that graph has bounded chromatic number (see e.g.~\cite{Axenovich, EG}).

\section{Graphs with a forbidden tree subgraph}
In this section, we prove Theorem \ref{thm:main}. 
For our proof, we need to establish some tools. First, we consider a modified hat guessing game, introduced by Bosek et al.~\cite{Jaroslaw}, in which each player makes $s$ guesses rather than a single guess, and in which the players win if some player makes a correct guess. For this modified game on a graph $G$, we write $\HG_s(G)$ for the maximum integer $k$ such that the players have a winning strategy when each player is assigned a hat of a color from the set $\{1, \dots, k\}$. 
For the modified hat guessing game with $s$ guesses, a method of Farnik \cite{Farnik} straightforwardly implies the following lemma.

\begin{lemma}
\label{lem:LLL}
Let $s \geq 1$ be an integer.
If $G$ is a graph of maximum degree $\Delta$, then $\HG_s(G) < (\Delta + 1) es$.
\end{lemma}

We also use the following lemma of Bosek et al.~\cite{Jaroslaw}
\begin{lemma}
\label{lem:IS}
Let $s \geq 2$ be an integer. Let $G$ be a graph, and let $V(G) = A \cup B$ be a vertex partition of $G$. If each vertex of $A$ has at most $d$ neighbors in $B$, then 
$\HG(G) \leq \HG_s(G[A])$, where $s = (\HG(G[B]) + 1)^d$.
\end{lemma}

Given a rooted tree $T$ with root $r$, we say that the \emph{height} of $T$ is the maximum distance $\dist(r,v)$ taken over all vertices $v$ in $T$. Furthermore, given integers $t,h \geq 1$, we say that a \emph{$t$-ary tree of height $h$} is a rooted tree of height $h$ in which every non-leaf vertex has exactly $t$ children. Observe that every tree $T$ is a subtree of some $t$-ary tree $T'$ of some height $h$, so if $T$ is a forbidden subgraph of a graph $G$, then a fortiori, $T'$ is also a forbidden subgraph. Therefore, in order to prove Theorem \ref{thm:main}, we instead prove the following equivalent theorem.
\begin{theorem}
\label{thm:t-ary}
Let $h \geq 1$ and $t \geq 2$ be integers. There exists a value $N = N(h,t)$ such that every graph $G$ with no subgraph isomorphic to a $t$-ary tree of height $h$ satisfies $\HG(G) < N$.
\end{theorem}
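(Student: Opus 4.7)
The plan is to induct on $h$, with Lemma~\ref{lem:IS} as the workhorse at each inductive step. For the base case $h = 1$, the condition that $G$ contains no $t$-ary tree of height $1$ is exactly $\Delta(G) \leq t-1$, so $G$ is properly $t$-colorable. Partitioning $V(G)$ into $t$ independent sets $I_1, \dots, I_t$, each consisting of vertices of $G$-degree at most $t-1$, I apply Lemma~\ref{lem:IS} a total of $t$ times, peeling one color class at each stage; starting from the empty graph (which has hat guessing number $0 < 1$) I obtain a bound $N(1,t)$ depending only on $t$.

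For the inductive step, assume the theorem holds for height $h-1$ and every value of $t$, and let $G$ be a graph with no $t$-ary tree of height $h$. Set $D = 3 t^h$ (any value comfortably exceeding $|V(T_{h-1})| + t \cdot t^{h-1} + t$ works, where $T_{h-1}$ denotes a $t$-ary tree of height $h-1$), and put $W = \{v \in V(G) : \deg_G(v) < D\}$. The key claim is that $G - W$ contains no $t$-ary tree of height $h-1$.

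To prove the claim, I would suppose for contradiction that $G - W$ contains such a tree $T$, which has $\tfrac{t^h - 1}{t - 1}$ vertices and $t^{h-1}$ leaves. I then extend $T$ to a $t$-ary tree of height $h$ in $G$ by processing the leaves of $T$ in an arbitrary order and appending $t$ fresh children to each from its $G$-neighborhood. Each leaf $\ell$ satisfies $\deg_G(\ell) \geq D$ since $\ell \notin W$; when I reach leaf $\ell_k$, the number of vertices already used (the vertices of $T$ together with the children attached to earlier leaves) is at most $|V(T)| + t \cdot t^{h-1} < D - t$, so $\ell_k$ retains at least $t$ unused $G$-neighbors to serve as its children. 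The resulting tree is a $t$-ary tree of height $h$ in $G$, contradicting the hypothesis, so the claim holds.

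Given the claim, the inductive hypothesis yields $\HG(G - W) < N(h-1,t)$. Since every vertex of $W$ has $G$-degree at most $D-1$, the subgraph $G[W]$ has maximum degree at most $D - 1$ and is $D$-colorable: I partition $W = W_1 \cup \cdots \cup W_D$ into independent sets, each consisting of vertices of $G$-degree at most $D - 1$. Starting from the bound on $\HG(G - W)$ and iteratively applying Lemma~\ref{lem:IS} $D$ times (taking $U = W_D, W_{D-1}, \dots, W_1$ in turn), I obtain a bound $N(h,t)$ for $\HG(G)$ that depends only on $h$ and $t$. The main technical hurdle is calibrating $D$ correctly for the greedy extension in the key claim; once that counting is carried out, the iterated applications of Lemma~\ref{lem:IS} are routine.
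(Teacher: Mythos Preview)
Your proposal is correct and follows essentially the same approach as the paper: induct on $h$, strip off the low-degree vertices, show the remainder has no $t$-ary tree of height $h-1$ via the same greedy extension, and then reattach the low-degree vertices in color classes using Lemma~\ref{lem:IS} repeatedly. The only differences are cosmetic---the paper handles the base case with the Lov\'asz Local Lemma rather than iterated Lemma~\ref{lem:IS}, and it bounds the number of color classes via the ``chromatic number forces every tree on $\chi$ vertices'' fact rather than the max-degree bound on $G[W]$---but your more self-contained choices work just as well (just take $\ell=2$ rather than $\ell=1$ in the first application of Lemma~\ref{lem:IS}, since the lemma requires $\ell\geq 2$).
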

\begin{proof}
We prove the theorem by induction on $h$. When $h = 1$, the theorem states that there exists a value $N$ such that every graph $G$ with no $K_{1,t}$ subgraph satisfies $\HG(G) < N$. 
Since such a graph $G$ has maximum degree $t-1$,
it follows from Lemma \ref{lem:IS} that $\HG(G) < et$. Therefore, it is sufficient to set $N(1,t) = et$.

Now, suppose that $h > 1$. Let $G$ be a graph containing no subgraph isomorphic to a $t$-ary tree of height $h$. We define $k = 2t^h$,
 and we let $A \subseteq V(G)$ be the set of all vertices in $G$ of degree less than $k$. 
 %Since $G$ has chromatic number at most $k-1$, $U$ can be partitioned into $k'$ independent sets for some value $k' < k$. We write $f(x) = x^k$ and observe $f(x) \geq x^{k-1} + 1$ for $x \geq 1$. Hence, 

% By Lemma \ref{lem:LLL}, for each $s \geq 1$, $\HG_s(G) < (k+1) es$. Furthermore, 
%by Lemma \ref{lem:IS}, 
%\[\HG(G) < f^k(\HG(G \setminus A)+1).\]
We claim that $G \setminus A$ contains no subgraph isomorphic to a $t$-ary tree of height $h-1$. Indeed, suppose that $T \subseteq G \setminus A$ is a $t$-ary tree of height $h-1$. Since no leaf of $T$ was added to $A$, every leaf of $T$ has degree at least $k$ in $G$. However, this implies that a $t$-ary tree of height $h$ can be found in $G$ by greedily choosing $t$ neighbors, for each of the $t^{h-1}$ leaves of $T$, that are not in $T$ and that were not chosen as neighbors of another leaf of $T$. This is possible, since 
\[k = 2t^h >(t^{h-1} -1)t + 2t^{h-1} > (t^{h-1} -1)t + |V(T)| .\]
Since we assumed that $G$ has no $t$-ary tree of height $h$ as a subgraph, we have reached a contradiction. Therefore, we conclude that $G \setminus A$ has no subgraph isomorphic to a $t$-ary tree of height $h-1$.

Now, by the induction hypothesis, $\HG(G \setminus A) < N(h-1, t)$. 
Furthermore, by Lemma \ref{lem:LLL}, for each $s \geq 1$, $\HG_s(G[A]) < eks$. 
Therefore, letting $s = (\HG(G \setminus A) + 1)^k$, Lemma \ref{lem:IS} implies that 
\[\HG(G) \leq \HG_s(G[A]) < eks \leq ek N(h-1,t)^k .\]
Thus, 
we let $N(h,t) = ek N(h-1,t)^k = 2et^h N(h-1,t)^{2t^h} $, and then $\HG(G) < N(h,t)$. This completes induction and the proof.
\end{proof}
Note that in the proof of Theorem \ref{thm:t-ary}, the constant $N(h,t)$ grows very large very quickly due to the repeated applications of Lemma \ref{lem:IS}. We do not attempt to optimize this constant, because we suspect that our method is not best possible.

\section{Graphs with bounded circumference}
In this section, we prove Theorem \ref{thm:circ}. For a graph $G$, the \emph{circumference} of $G$ is the length of the longest cycle in $G$. Hence, by proving Theorem \ref{thm:circ}, we show that a graph with a bounded circumference also has a bounded hat guessing number. In our proof, we often consider the blocks of a connected graph, which are defined as follows. For a graph $G$, a \emph{block} of $G$ is a nonempty subgraph $H \subseteq G$ satisfying the following properties:
\begin{itemize}
\item $H$ is either $2$-connected or isomorphic to $K_2$;
\item Every connected subgraph $H' \subseteq G$ satisfying $H \subsetneq H'$ has a cut vertex.
\end{itemize}
In other words, $H$ is a block of $G$ if $H$ is maximal with respect to the property of being either $2$-connected or isomorphic to $K_2$.

Our strategy for proving Theorem \ref{thm:circ} is as follows.
First, we show that the hat guessing number of a graph $G$ can be bounded above by considering each block of $G$ individually. Then, we show that each block of a graph of bounded circumference also has bounded treedepth, and we use this fact to obtain an upper bound for $\HG(G)$. Some of our intermediate techniques, such as bounding the hat guessing number of a graph using its block decomposition, may have other applications in the study of the hat guessing game. 

%In our proof, we will sometimes consider a modified version of the hat guessing game in which each player makes two guesses. The essential rules of this modified game are the same as those of the original game, so that a player at a vertex $v$ only sees the players at the neighbors of $v$, and so that players still do not hear each other's guesses. However, in this modified game, each player guesses his hat color twice, and the players win if and only if at least one player makes a correct guess. Given a graph $G$, we write $\HG_2(G)$ for the maximum integer $q$ such that the players have a winning strategy in the hat guessing game with two guesses played on $G$ with $q$ colors. 
 %In \cite{Jaroslaw}, Bosek et al.~show that the modified hat guessing game in which players make multiple guesses is a useful tool for bounding the hat guessing number of graphs that admit a vertex partition with certain properties. 
 
 In some of our intermediate results, we also restrict the set of hat colors available to the adversary at certain vertices. Kokhas and Latyshev \cite{KokhasI} showed that when the hat color set available to the adversary differs between vertices, only the number of hat colors available to the adversary at each vertex affects whether or not the players have a winning strategy on $G$.

\subsection{Using block decompositions to bound hat guessing number}
In this subsection, we show that in order to bound $\HG(G)$ for a graph $G$, it is enough to bound $\HG_2(B)$ for each block $B$ of $G$, where $\HG_2(B)$ is the maximum number of hat colors with which the players in $B$ have a winning strategy in the game on $B$ with two guesses. We believe that this idea has potential for broad application in the study of the hat guessing game.

The key result (Lemma \ref{lem:blocks}) of this subsection follows from two lemmas: Lemma \ref{lem:2} and Lemma \ref{lem:rus}. The proof of Lemma \ref{lem:2} is similar to the original proof of Lemma \ref{lem:IS}, and Lemma \ref{lem:rus} comes directly from \cite{KokhasI} with minor changes. We give a full proof for each lemma, since Lemma \ref{lem:2} has a short and elegant proof, and since the original proof of Lemma \ref{lem:rus} uses the somewhat complicated notation of constructors.
\begin{lemma}
\label{lem:2}
Let $G$ be a graph, let $v \in V(G)$, and let $\ell = \HG_2(G)$. If the adversary has $2$ available hat colors at $v$ and $\ell + 1$ available hat colors at each other vertex of $G$, then the adversary has a winning strategy in the hat guessing game on $G$.
\end{lemma}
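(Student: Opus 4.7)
The plan is to prove the contrapositive: assuming the players possess a winning one-guess strategy $\Gamma = (\Gamma_u)_{u \in V(G)}$ on $G$ with color palette $\{1,2\}$ at $v$ and $[\ell+1]$ at every other vertex, I will construct an explicit winning two-guess strategy $\Gamma'$ on $G$ with $\ell+1$ colors at every vertex, which contradicts $\ell = \HG_2(G)$.

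The construction of $\Gamma'$ is a simple simulation that exploits the fact that $v$ has only two possible hat colors in $\Gamma$. In $\Gamma'$, player $v$ always outputs the pair $(1,2)$, regardless of what she sees. Every other player $u$ uses $\Gamma_u$ against an imagined assignment in which $v$'s hat color is forced into $\{1,2\}$: if $v \notin N(u)$, then $u$ simply outputs $\Gamma_u$ of the colors she observes (which is already a legal input to $\Gamma_u$, since $\Gamma$ does not constrain $v$'s color to lie in $\{1,2\}$ from $u$'s point of view); if $v \in N(u)$, then $u$ computes two one-guess outputs, the first obtained by replacing $v$'s observed color by $1$, the second by $2$, and declares these as her two guesses.

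To verify that $\Gamma'$ is winning, I fix an arbitrary assignment $\phi: V(G) \to [\ell+1]$ and split on $\phi(v)$. If $\phi(v) \in \{1,2\}$ then player $v$'s guess pair $(1,2)$ already contains $\phi(v)$, and we are done. Otherwise $\phi(v) \in \{3,\dots,\ell+1\}$, and I consider the two auxiliary assignments $\phi_1', \phi_2'$ that agree with $\phi$ off of $v$ and set $\phi_i'(v) = i$. Each $\phi_i'$ is a legal assignment for the one-guess game, so by the assumed winning property of $\Gamma$ some player guesses correctly under each $\phi_i'$. Because $\Gamma_v(\phi(N(v)))$ is a single element of $\{1,2\}$, there is at least one value of $i$ for which the correct guesser under $\phi_i'$ is some $u \neq v$. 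A short case check on whether $v \in N(u)$ confirms that this same $u$ also guesses $\phi(u)$ correctly under $\Gamma'$: if $v \notin N(u)$ then $u$'s view of $N(u)$ is identical under $\phi$ and $\phi_i'$, and if $v \in N(u)$ then $u$'s view under $\phi_i'$ is exactly one of the two colorings that $u$'s $\Gamma'$-strategy simulates.

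I anticipate no real obstacle here beyond executing the case analysis cleanly; the only subtlety worth flagging is the step that guarantees the correct $\Gamma$-guesser is not $v$ itself, which is handled by the simple observation that the single value $\Gamma_v(\phi(N(v)))$ cannot simultaneously equal both $1$ and $2$, so that at least one of the two choices of $i$ forces the correct guesser to be a different player.
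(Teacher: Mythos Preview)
Your proof is correct. It is essentially the dual of the paper's argument: the paper works from the adversary's side, observing that the two possible colors at $v$ cause each vertex of $H=G\setminus\{v\}$ to have at most two possible guesses, and then invokes $\HG_2(H)\le\HG_2(G)=\ell$ to color $H$ so that both guesses fail before coloring $v$ last. You instead argue the contrapositive from the players' side, turning a hypothetical winning one-guess strategy with palette $\{1,2\}$ at $v$ into a winning two-guess strategy on all of $G$ with $\ell+1$ colors everywhere; the simulation (replace $v$'s observed color by $1$ and by $2$) is exactly the same mechanism, and your pigeonhole step ``$\Gamma_v$ cannot equal both $1$ and $2$'' is the players'-side counterpart of the paper's ``color $v$ last.'' The only cosmetic difference is that the paper passes through the subgraph $H$ while you stay on $G$, which lets you avoid the (trivial) monotonicity $\HG_2(H)\le\HG_2(G)$.
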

\begin{proof}
We write
 $H = G \setminus \{v\}$, and we assume without loss of generality that the hat color set available to the adversary at each vertex of $V(H)$ is $\{1, \dots, \ell+1\}$.
We fix a set of exactly $2$ colors at $v$ and a guessing strategy $\Gamma = \{\Gamma_v:v \in V(G)\}$ on $G$. For each of the two possible colors at $v$, $\Gamma$ determines a unique guessing strategy on $H$. Therefore, for each hat assignment on $H$, each vertex $u \in V(H)$ guesses one of $2$ possible colors. Since $\HG_2(H) \leq \HG_2(G) \leq \ell$, we may assign each vertex of $H$ a hat from the color set $\{1, \dots, \ell + 1\}$ in such a way that for each vertex $u \in V(H)$, neither of the two possible colors guessed at $u$ is correct. We give $V(H)$ such a hat assignment, and hence no vertex of $H$ guesses its hat color correctly. 

Now, with hat colors at $V(H)$ assigned, $\Gamma_v$ uniquely determines a guess at $v$, so we may assign $v$ a color that does not match it guess. Therefore, the adversary has a winning hat assignment with $2$ colors available at $v$ and $\ell+1$ colors available at every other vertex of $G$.
\end{proof}
\begin{lemma}
\label{lem:rus}
Let $G$ be a graph with two subgraphs $G_1$ and $G_2$ and with a cut vertex $v$, such that $G = G_1 \cup G_2$ and $V(G_1) \cap V(G_2)=  \{v\}$. If $\HG(G_1) \leq \ell$ and $\HG_2(G_2) \leq \ell$, then
$\HG(G) \leq \ell$.
\end{lemma}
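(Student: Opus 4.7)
The plan is to show, given any guessing strategy $\Gamma$ on $G$ with $\ell+1$ colors at every vertex, that the adversary has a winning hat assignment. I will exploit the cut-vertex structure at $v$: use $\HG(G_1) \leq \ell$ for the $G_1$-side locally, and transport the resulting information across $v$ via a carefully designed $2$-guess strategy on $G_2$, where the stronger hypothesis $\HG_2(G_2) \leq \ell$ provides the extra flexibility needed at the cut-vertex. Note that because $v$ is a cut-vertex with $V(G_1) \cap V(G_2) = \{v\}$, every vertex in $V(G_1) \setminus \{v\}$ has all its $G$-neighbors in $V(G_1)$, and similarly for $V(G_2) \setminus \{v\}$; this is what makes it possible to analyze the two sides almost independently.

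First I will fix, for each coloring $\phi \in [\ell+1]^{N_{G_2}(v)}$ of the $G_2$-side neighborhood of $v$, an induced strategy $\Gamma^\phi$ on $G_1$ in which $V(G_1) \setminus \{v\}$ retain their $\Gamma$-guess functions and $v$'s guess function is $\Gamma^\phi_v(\psi) = \Gamma_v(\psi, \phi)$ (pretending the $G_2$-side neighbors are colored according to $\phi$). Applying $\HG(G_1) \leq \ell$ produces a winning hat assignment $c^\phi$ on $V(G_1)$ against $\Gamma^\phi$; set $A(\phi) = c^\phi(v)$ and $B(\phi) = \Gamma_v(c^\phi|_{N_{G_1}(v)}, \phi)$, noting $A(\phi) \neq B(\phi)$ since $v$ itself does not guess correctly under $c^\phi$. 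Next I will define a $2$-guess strategy $\Gamma'$ on $G_2$: each $u \in V(G_2) \setminus \{v\}$ outputs $\Gamma_u$ as both of its guesses, while $v$'s two guesses at view $\phi$ are $A(\phi)$ and $B(\phi)$. By $\HG_2(G_2) \leq \ell$, the adversary has a winning $c_2$ on $V(G_2)$ against $\Gamma'$, so that no $u \in V(G_2) \setminus \{v\}$ guesses correctly and $c_2(v) \notin \{A(\phi^*), B(\phi^*)\}$, where $\phi^* = c_2|_{N_{G_2}(v)}$. I will then assemble a hat assignment $c$ on $V(G)$ by taking $c = c_2$ on $V(G_2)$ and $c = c^{\phi^*}|_{V(G_1) \setminus \{v\}}$ on $V(G_1) \setminus \{v\}$, with $c(v) = c_2(v)$; this immediately defeats the vertices in $V(G_2) \setminus \{v\}$ (by the $2$-guess win) and the cut-vertex $v$ (whose guess in $G$ is $B(\phi^*) \neq c_2(v)$ by construction).

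The main obstacle is the verification for $V(G_1) \setminus \{v\}$: a vertex $u$ adjacent to $v$ in $G_1$ now sees $v$'s color as $c_2(v)$ rather than $A(\phi^*)$, so the winning property of $c^{\phi^*}$ (which was established with $v$-color $A(\phi^*)$) does not transfer verbatim. Overcoming this will require a refined choice of $c^\phi$, or a finer definition of the two guesses at $v$ in $\Gamma'$, so that $c^{\phi^*}|_{V(G_1) \setminus \{v\}}$ remains a winning $G_1$-side assignment for any $v$-color outside $\{A(\phi^*), B(\phi^*)\}$. This is precisely the step that makes essential use of the two-guess budget at the cut-vertex and the stronger bound $\HG_2(G_2) \leq \ell$; the weaker bound $\HG(G_2) \leq \ell$ alone would give only a single "forbidden" color at $v$, leaving no slack to simultaneously forbid the two $v$-colors $A(\phi^*), B(\phi^*)$ that jointly certify the compatibility of the $G_1$- and $G_2$-sides.
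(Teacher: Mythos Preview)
Your proposal correctly isolates the cut-vertex structure and the reason two guesses are needed at $v$, but as you yourself note, it stops short of a proof: the final paragraph identifies a genuine gap and does not close it. That gap is not merely technical. In your construction, $c^{\phi^*}$ makes the $G_1$-neighbors of $v$ guess wrong only when $v$ carries the color $A(\phi^*)$; once you overwrite $v$'s color with $c_2(v) \notin \{A(\phi^*), B(\phi^*)\}$, those neighbors may now guess correctly, and nothing in your setup prevents this. Your second ``forbidden'' color $B(\phi^*)$ is merely $v$'s own guess, not a second $v$-color under which $V(G_1)\setminus\{v\}$ remain fooled, so forbidding it buys nothing for the $G_1$ side.

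The paper's argument avoids this circularity by reversing the order of the two sides and extracting a different kind of information from $\HG(G_1) \le \ell$. Let $K$ be the set of $(\ell+1)$-colorings of $V(G_1)$ under which every vertex of $V(G_1)\setminus\{v\}$ guesses incorrectly (these guesses depend only on colors in $V(G_1)$, so $K$ is well defined without reference to $G_2$). The key claim is that some coloring $\alpha$ of $N_{G_1}(v)$ extends to \emph{two} colorings $\phi_1,\phi_2 \in K$ with $\phi_1(v)=\gamma_1 \neq \gamma_2=\phi_2(v)$; for otherwise $v$ could always guess the unique compatible color and the players would win on $G_1$, contradicting $\HG(G_1)\le\ell$. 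With $\alpha$ fixed on $N_{G_1}(v)$, the adversary now has two colors $\{\gamma_1,\gamma_2\}$ available at $v$ and $\ell+1$ elsewhere on $G_2$, and Lemma~\ref{lem:2} (which is where $\HG_2(G_2)\le\ell$ enters) furnishes a winning assignment $\psi$ on $G_2$ with $\psi(v)\in\{\gamma_1,\gamma_2\}$. Gluing $\psi$ to the matching $\phi_i$ gives a global winning assignment, and no consistency problem arises because $\phi_1,\phi_2$ were chosen in advance to agree on $N_{G_1}(v)$ and both lie in $K$. This ``two admissible $v$-colors over a common $G_1$-neighborhood'' step is exactly the refined choice your last paragraph gestures at, and it is the missing idea.
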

\begin{proof}
We show that the adversary wins the hat guessing game on $G$ when $\ell+1$ colors are available at each vertex. Let $\Gamma = \{\Gamma_v:v\in V(G)\}$ be a fixed guessing strategy on $G$. Note that since $v$ is a cut vertex, the guess of each vertex of $V(G_1) \setminus \{v\}$ depends entirely on the colors assigned to $V(G_1)$. Since $\HG(G_1) \leq \ell$, the adversary has at least one hat assignment on $G_1$ that causes all players at $V(G_1) \setminus \{v\}$ to guess incorrectly when following $\Gamma$. Let $K$ be the set of all such hat assignments on $G_1$ that cause all players at $V(G_1) \setminus \{v\}$ to guess incorrectly.

Now, let $A$ be the set of colorings of $N_{G_1}(v)$ that can be extended to a coloring in $K$. (In other words, $A$ is the set of colorings in $K$ restricted to $N_{G_1}(v)$.) We claim that for some coloring $\alpha \in A$, there are at least two colors $\gamma$ such that the coloring $\alpha$ can be extended to a coloring $\phi \in K$ for which $\phi(v) = \gamma$. Indeed, suppose that for every $\alpha \in A$, there is a unique color $\gamma_{\alpha}$ such that $\alpha$ can be extended to an assignment $\phi \in K$ for which $\phi(v) = \gamma_{\alpha}$. Then we claim that the players have a winning strategy on $G_1$ with $\ell+1$ colors, as follows. Given a hat assignment $\phi$ on $G_1$, the players at $V(G_1) \setminus \{v\}$ follow the strategy $\Gamma$.
We write $\alpha$ for the coloring of $N_{G_1}(v)$ under $\phi$, and if $\alpha \in A$, then $v$ guesses $\gamma_{\alpha}$; otherwise, $v$ guesses the least-valued available color.
 If $\phi$ is winning for the adversary, then every player at $V(G_1) \setminus \{v\}$ guesses incorrectly, so $\phi \in K$, and $\phi$ determines a coloring $\alpha \in A$. Then, there exists a unique color $\gamma_{\alpha}$ such that $\alpha$ can be extended to a coloring in $K$ in which $\phi(v) = \gamma_{\alpha}$; hence, $\phi(v) = \gamma_{\alpha}$, and the player at $v$ guesses correctly. Since the players have a winning strategy on $G_1$ with $\ell+1$ colors, the assumption that $\HG(G_1) \leq \ell$ is contradicted. Therefore, for some coloring $\alpha \in A$, there are two colors $\gamma_1, \gamma_2$ such that the coloring $\alpha$ can be extended to colorings $\phi_1, \phi_2 \in K$ for which $\phi_1(v) = \gamma_1$ and $\phi_2(v) = \gamma_2$. 

We can now show a winning hat assignment for the adversary. First, the adversary chooses two hat assignments $\phi_1, \phi_2 \in K$ for $V(G_1) $ so that $\phi_1$ and $\phi_2$ agree on $N_{G_1}(v)$ but not on $v$. % and so that both assignments cause every player in $V(G_1) \setminus \{v\}$ guesses incorrectly.
 We write $\phi_1(v) = \gamma_1$ and $\phi_2(v) = \gamma_2$. The adversary commits to using either $\phi_1$ or $\phi_2$ on $V(G_1)$. Next, we observe that as a coloring $\alpha$ of $N_{G_1}(v)$ has been fixed by $\phi_1$ and $\phi_2$, $\Gamma$ determines a unique guessing strategy for the vertices of $G_2$ that depends only on the hat colors at $V(G_2)$. 
We also observe that two colors are available to the adversary at $v$ and that $\ell+1$ colors are available to the adversary at each other vertex of $V(G_2) \setminus \{v\}$.
Since $\HG_2(G_2) \leq \ell$, it follows from Lemma \ref{lem:2} that
 the adversary may choose some hat assignment $\psi$ on $V(G_2)$ so that $\psi(v) \in \{\gamma_1, \gamma_2\}$ and so that all vertices of $G_2$ guess incorrectly. Writing $\psi(v) = \gamma_i$, we then see that $\psi \cup \phi_i$ is a hat assignment on $G$ that causes all players in $V(G)$ to make an incorrect guess. 

Hence, the adversary has a winning hat assignment on $G$ when $\ell+1$ vertices are available at each vertex, and thus $\HG(G) < \ell+1$. This completes the proof.
\end{proof}
Now, we are ready for the key lemma of this subsection.

\begin{lemma}
\label{lem:blocks}
Let $G$ be a graph with blocks $B_1, \dots, B_k$. If $\HG_2(B_i) \leq \ell$ for each block $B_i$, then $\HG(G) \leq \ell$.
\end{lemma}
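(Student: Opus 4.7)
The plan is an induction on $k$, the number of blocks of $G$, using Lemma~\ref{lem:rus} as the inductive engine. Two preliminary observations feed into this. First, $\HG(H) \leq \HG_2(H)$ holds for every graph $H$, since any one-guess winning strategy becomes a two-guess winning strategy by having each player repeat the same guess. Second, for a disconnected graph with components $C_1, \dots, C_m$, we have $\HG(G) = \max_i \HG(C_i)$: the players can win on $G$ using any per-component winning strategy, while the adversary can combine per-component winning assignments. Since the blocks of $G$ are distributed among its components, we may treat each component separately and thus assume $G$ is connected throughout the induction.

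For the base case $k = 1$, the graph $G$ coincides with its sole block $B_1$, and the two observations give $\HG(G) \leq \HG_2(B_1) \leq \ell$. For the inductive step with $k \geq 2$, I would exploit the standard structural fact that the block-cut tree of a connected graph has a leaf: there exists a block $B$ that intersects the rest of $G$ in exactly one cut-vertex $v$. Set $G_2 = B$ and let $G_1$ be the subgraph of $G$ induced by $(V(G) \setminus V(B)) \cup \{v\}$. Then $G = G_1 \cup G_2$, $V(G_1) \cap V(G_2) = \{v\}$, and $v$ is a cut-vertex of $G$, so the hypotheses of Lemma~\ref{lem:rus} are in place. The blocks of $G_1$ are exactly the blocks of $G$ other than $B$, because deleting the non-cut-vertices of a leaf block leaves every other two-connected subgraph of $G$ intact. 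Hence the inductive hypothesis applies to $G_1$ and yields $\HG(G_1) \leq \ell$, while the lemma's hypothesis directly gives $\HG_2(G_2) = \HG_2(B) \leq \ell$. An application of Lemma~\ref{lem:rus} then closes the induction with $\HG(G) \leq \ell$.

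The only genuine subtlety, which is also the reason peeling off one leaf block at a time is the right move, is the asymmetry built into Lemma~\ref{lem:rus}: it requires full two-guess control $\HG_2 \leq \ell$ on the smaller piece $G_2$ but only one-guess control $\HG \leq \ell$ on the larger piece $G_1$. This matches the induction exactly, since the peeled block always carries the required $\HG_2$ bound by the hypothesis of Lemma~\ref{lem:blocks}, while the remainder $G_1$ only needs the weaker $\HG$ bound supplied by the inductive hypothesis. Had we instead tried to induct on $\HG_2(G)$, or to remove two blocks at a time, we would have needed a stronger conclusion from Lemma~\ref{lem:rus} than its statement provides, so the match between which quantity is being inducted upon and which quantity Lemma~\ref{lem:rus} produces is really the crux; everything else is routine block-cut-tree bookkeeping.
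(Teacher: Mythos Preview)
Your proposal is correct and follows essentially the same approach as the paper's proof: induct on the number of blocks, peel off a terminal (leaf) block $B$ meeting the rest of $G$ at a single cut-vertex $v$, apply the induction hypothesis to the remainder $G_1$, and invoke Lemma~\ref{lem:rus} with $G_2 = B$. The paper phrases the construction of $G_1$ slightly differently (as a union of maximal connected subgraphs in which $v$ is not a cut-vertex) but the resulting decomposition and argument are the same; your write-up is in fact a bit more explicit about the preliminary inequality $\HG \le \HG_2$ and the reduction to connected $G$.
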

\begin{proof}
If $G$ is $2$-connected, then by assumption, $\HG(G) \leq \HG_2(G) \leq \ell$, and we are done. Otherwise, we assume without loss of generality that $G$ is connected and induct on the number of blocks in $G$. Suppose that $G$ has $k > 1$ blocks. Let $B$ be a terminal block of $G$, that is, a block of $G$ containing a single cut vertex $v$. Let $A_1, \dots, A_t$ be maximal connected subgraphs of $G$ in which $v$ is not a cut vertex, and write $A_t = B$. Since $v$ is a cut vertex of $G$, it follows that $t \geq 2$, and $V(A_i) \cap V(A_j) = \{v\}$ for any distinct pair $i,j \in \{1, \dots, t\}$. We then let $G' = A_1 \cup \dots \cup A_{t-1}$, and $G'$ is connected with fewer than $k$ blocks.

By the induction hypothesis, $\HG(G') \leq \ell$, and by our assumption, $\HG_2(B) \leq \ell$. Thus, by Lemma \ref{lem:rus}, the hat guessing number of $G$ is at most $\ell$. This completes induction and the proof.
\end{proof}

Lemma \ref{lem:blocks} seems to have the potential for broad application in the study of the hat guessing game. For instance, one can use the lemma to prove that a cactus graph $G$ satisfies $\HG(G) \leq 16$ after observing that $\HG_2(C) < 6e$ for every cycle $C$ by Lemma \ref{lem:LLL}.

\subsection{Proof of Theorem \ref{thm:circ}}
With Lemma \ref{lem:blocks} in place, we are ready to begin the proof of Theorem \ref{thm:circ}. In our proof, we repeatedly use the notion of \emph{treedepth}, defined as follows. Given a rooted tree $T$ with root $r$, the \emph{height} of a vertex $v \in V(T)$ is the distance in $T$ from $r$ to $v$. Then, the height of $T$ is the maximum height of a vertex in $T$. We say that the \emph{closure} of $T$ is the graph on $V(T)$ obtained by adding an edge between every ancestor-descendant pair in $T$. We write $\cl(T)$ for the closure of $T$. Then, we say that a graph $G$ has treedepth at most $d$ if $G$ is a subgraph of $\cl(T)$ for some rooted tree $T$ of height $d-1$. The reason for this ``off by one" discrepancy is that in a rooted tree of height $d-1$, a maximum length path from the root to a leaf contains exactly $d$ vertices. In \cite{HeDegeneracy}, He and Li show that graphs of bounded treedepth also have bounded hat guessing number. 

First, we establish two lemmas that relate the circumference of a graph to its treedepth. The first of these lemmas is a classical lemma of Dirac that follows straightforwardly from Menger's theorem.
\begin{lemma}[\cite{DiracPath}]
\label{lem:Dirac}
If $G$ is a two-connected graph with a path of length $\ell$, then $G$ contains a cycle of length greater than $\sqrt{2 \ell}$. 
\end{lemma}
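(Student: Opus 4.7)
Let $P = v_0 v_1 \cdots v_\ell$ be a path of length $\ell$ in $G$, and write $c$ for the circumference of $G$. The plan is to prove the (equivalent) bound $\ell \le \binom{c}{2}$, which immediately yields $c > \sqrt{2\ell}$. Since $G$ is $2$-connected, Menger's theorem provides two internally disjoint paths between any two vertices; equivalently, for any $i < j$, the arc $v_i v_{i+1} \cdots v_j$ of $P$ can be completed to a cycle of $G$ by a ``detour'' path that is internally disjoint from the arc.

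The approach is a greedy cycle cover of $P$. Starting from $i_0 = 0$, I would inductively choose $i_k$ to be the largest index $j > i_{k-1}$ for which there is a $v_{i_{k-1}}$-$v_j$ detour in $G$ that is internally disjoint from the $P$-arc $v_{i_{k-1}} \cdots v_j$. The arc together with such a detour forms a cycle in $G$ and so has length at most $c$, giving the per-step bound $i_k - i_{k-1} \le c - 1$. The process terminates with some $i_m = \ell$, and hence $\ell \le m (c-1)$.

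The main obstacle is bounding $m$, since the quadratic savings must come from arguing that only $O(c)$ cycles suffice rather than $O(\ell / c)$. The intended argument is a splicing lemma: if two detours $D_j$ and $D_k$ arise from well-separated steps of the construction, then one can concatenate $D_j$, a portion of $P$, $D_k$, and another portion of $P$ to form a single cycle of $G$. The maximality in the choice of each $i_k$ forces this spliced cycle to have length at least the number of detours it picks up plus the arc pieces, so if $m$ were too large the circumference would exceed $c$, a contradiction. Quantifying this via a careful pairing argument should yield $m \le c - 1$, and a sharper double counting along the same lines should tighten the bound to exactly $\ell \le \binom{c}{2}$, completing the proof.
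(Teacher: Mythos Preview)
The paper does not actually prove this lemma---it is quoted from Dirac's paper and merely remarked to follow ``straightforwardly from Menger's theorem.'' So there is no in-paper proof to compare against, only the classical argument.

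Your setup is fine, but the heart of the matter---bounding the number $m$ of greedy steps---is a genuine gap, and the splicing idea as you describe it does not work. A detour $D_j$ runs from $v_{i_{j-1}}$ to $v_{i_j}$ and a detour $D_k$ runs from $v_{i_{k-1}}$ to $v_{i_k}$; to close these into a single cycle using ``a portion of $P$, $D_k$, and another portion of $P$'' you would need two \emph{disjoint} subpaths of $P$ joining the four endpoints appropriately, but along a path those two portions necessarily overlap. Without some further structural input (e.g.\ forcing intersections among the detours) there is no cycle here, and the maximality of each $i_k$ does not by itself create one. So the step ``should yield $m \le c-1$'' is unsupported.

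The standard Dirac argument is more direct and avoids the greedy cover entirely. Use Menger once to obtain a cycle $C$ through the two endpoints $v_0$ and $v_\ell$ of $P$, say of length $k \le c$. Let $t = |V(C)\cap V(P)| \le k$, and let these $t$ common vertices cut $P$ into $t-1$ segments. For each segment, closing it with \emph{each} of the two $C$-arcs between its endpoints gives two cycles, both of length at most $c$; adding the two inequalities yields that the segment has length at most $c - k/2$. Summing over the $t-1 \le k-1$ segments gives
\[
\ell \;\le\; (k-1)\Bigl(c - \tfrac{k}{2}\Bigr) \;\le\; (c-1)\cdot \tfrac{c}{2} \;=\; \binom{c}{2} \;<\; \tfrac{c^2}{2},
\]
which is exactly the bound you were aiming for. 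The point is that a \emph{single} application of Menger already controls both the number of segments and their individual lengths; no iterated construction is needed.
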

From Lemma \ref{lem:Dirac}, we see that in a two-connected graph of circumference $c$, every path has length less than $ \frac{1}{2} c^2 $. Using this fact, we prove our next lemma.
\begin{lemma}
\label{lem:td}
If $G$ is a two-connected graph of circumference $c$, then the treedepth of $G$ is at most $\frac{1}{2}c^2 $.
\end{lemma}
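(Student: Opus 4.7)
The plan is to exhibit a rooted tree $T$ on $V(G)$ of small height whose closure contains $G$. Since $G$ is two-connected it is in particular connected, so I can take $T$ to be a depth-first-search spanning tree of $G$, rooted at any vertex $r$. The defining property of a DFS tree is that every non-tree edge of $G$ joins a vertex to one of its ancestors in $T$; combined with the fact that tree edges trivially join parents to children, this yields $G \subseteq \cl(T)$. So the DFS tree is the natural witness for the treedepth bound.

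It then suffices to bound the height of $T$. Any root-to-leaf path in $T$ is itself a path in $G$, because $T$ is a spanning subgraph of $G$. Hence the height of $T$ is at most the length of a longest path in $G$. By Lemma \ref{lem:Dirac}, any path of length $\ell$ in $G$ forces a cycle of length greater than $\sqrt{2\ell}$, but since $G$ has circumference $c$, such a cycle has length at most $c$; therefore $\sqrt{2\ell} < c$, i.e.\ $\ell < \tfrac{1}{2}c^2$. Consequently the height of $T$ is strictly less than $\tfrac{1}{2}c^2$, and by the ``off by one'' convention already noted in the paper (treedepth equals height of the witnessing tree plus one), the treedepth of $G$ is at most $\tfrac{1}{2}c^2$, interpreting the bound with the implicit rounding needed to produce an integer.

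I do not foresee any real obstacle: the two ingredients (the DFS-tree closure property and Dirac's lemma) each do essentially one job. The only subtlety is keeping careful track of the ``height versus treedepth'' convention, together with the fact that path lengths are integers, which together determine whether the final inequality is written with a floor or not — but these are routine bookkeeping details rather than a genuine difficulty.
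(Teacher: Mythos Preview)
Your proposal is correct and is essentially identical to the paper's own proof: both take a depth-first-search spanning tree rooted at an arbitrary vertex, use the standard fact that all non-tree edges of a DFS tree are ancestor--descendant edges to conclude $G\subseteq\cl(T)$, and then invoke Lemma~\ref{lem:Dirac} to bound the height of $T$ (hence the treedepth) by $\tfrac{1}{2}c^2$. The only differences are cosmetic bookkeeping around the strict inequality and the integer/height conventions, exactly as you anticipated.
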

\begin{proof}
We choose a vertex $r \in V(G)$, and we obtain a rooted tree $T \subseteq G$ by performing a depth-first search from $r$. Since $T$ is obtained from a depth-first search, every edge of $G$ joins an ancestor-descendant pair in $T$. Hence, $G$ is a subgraph of $\cl(T)$. By Lemma \ref{lem:Dirac}, the maximum distance in $T$ from $r$ to a leaf of $T$ is at most $\frac{1}{2}c^2  - 1$, so the treedepth of $G$ is at most $\frac{1}{2}c^2 $.
\end{proof}

From Lemma \ref{lem:td}, it follows that in a graph $G$ of bounded circumference, every block $B$ of $G$ has bounded treedepth. Hence, if we show that $\HG_2(B)$ is bounded for each block $B$ of $G$, then it follows from Lemma \ref{lem:blocks} that $\HG(G)$ is bounded. 

With this in mind, we aim to show that for graphs $H$ of bounded treedepth, $\HG_2(H)$ is also bounded.
This can be achieved by following a method of He and Li \cite{HeDegeneracy}, originally used to bound the hat guessing number of graphs of bounded treedepth in the traditional hat guessing game with single guesses. He and Li consider the recursively defined sequence
\[s_{n+1} = 1 + \prod_{i = 0}^n s_n, \indent s_0 = 1,\]
which is commonly known as Sylvester's sequence, and they show that given a rooted tree $T$, in order for the adversary to defeat the players on $\cl(T)$, it is enough for the adversary to have $s_{t+1}$ colors available for each vertex at height $t$, for each $t \geq 0$. Since a graph of treedepth $d$ is a subgraph of the closure of a rooted tree $T$ whose vertices achieve a maximum height of $d - 1$, it then follows that the hat guessing number of a graph of treedepth $d$ is less than $s_d$.

For the modified game in which each player is allowed $s$ guesses, we consider the recursively defined sequence
\[a_{s,n+1} = 1 + s \prod_{i = 0}^n a_{s,i}, \indent a_{s,0} = 1.\] 
By closely following the methods of \cite{Golomb}, 
we obtain the following closed form for each value $a_{s,n}$.
\begin{lemma}
\label{lem:theta}
For each value $s \geq 1$, there exists a constant $\theta_s  \approx  1.0213 \left (s + \frac{1}{2} \right ) $ such that for all $n \geq 1$, 
\[
a_{s,n} < (\theta_s)^{2^{n-1}} + \frac{1}{2}.
\]
\end{lemma}
\begin{proof}
To simplify notation, we let $s$ be fixed, and we write $a_n = a_{s,n}$.
For each $n \geq 0$, we define $b_n = a_n - \frac{1}{2}$. Then, we observe that for all $n \geq 1$, 
\[a_n = 1 + s \prod_{i = 0}^{n-1} a_i = 1 + \frac{a_{n+1} - 1}{a_n},\]
which simplifies to $a_{n+1} - \frac{1}{2} = a_n^2 - a_n + \frac{1}{2}$. After substitution, $b_{n+1} = b_n^2 + \frac{1}{4}$.

Next, for each $n \geq 0$, we define $\theta_n = (b_{n+1})^{2^{-n}}$. Golomb \cite{Golomb} shows that the limit $\theta = \lim_{n \rightarrow \infty} \theta_n$ exists. Furthermore, for $n \geq 1$,
\[\theta_n = \left (b_n^2 + \frac{1}{4} \right )^{2^{-n}} = (b_n)^{2^{-n+1}} \left ( 1 + \frac{1}{4b_n^2} \right ) ^{2^{-n}} = \theta_{n-1} \left ( 1 + \frac{1}{4b_n^2} \right ) ^{2^{-n}}.  \]
Therefore, for all $n \geq 0$,
\[\theta_n = \theta_0 \prod_{i = 1}^n  \left ( 1 + \frac{1}{4b_i^2} \right ) ^{2^{-i}} = \left (s+\frac{1}{2} \right ) \prod_{i = 1}^n  \left ( 1 + \frac{1}{4b_i^2} \right ) ^{2^{-i}} .\]
A computation shows that $\theta = \lim_{n \rightarrow \infty} \theta_n \approx 1.0213 \left (s + \frac{1}{2} \right )$, and furthermore, $\theta_n < \theta$ for all $n \geq 0$. Therefore, for all $n \geq 1$, 
\[a_n = b_n + \frac{1}{2}  = (\theta_{n-1})^{2^{n-1}} +\frac{1}{2} < \theta^{2^{n-1}} + \frac{1}{2}.\]
\end{proof}

Now, by following the method of He and Li \cite{HeDegeneracy}, we show that a graph of treedepth $d$ has hat guessing number less than $a_{s,d}$ in the game with $s$ guesses, as follows.

\begin{theorem}
\label{thm:td}
If $G$ is a graph of treedepth $d$, then $\HG_s(G) < a_{s,d}$.
\end{theorem}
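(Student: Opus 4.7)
The plan is to mirror He and Li's induction for the single-guess game, with the factor of two in the recurrence for $a_n$ absorbing the second guess that each player now makes. Since $G$ has treedepth $d$, $G$ is a subgraph of $\cl(T)$ for some rooted tree $T$ of height $d-1$, and by subgraph monotonicity of $\HG_2$ it suffices to show that the adversary wins the two-guess game on $\cl(T)$ when every vertex has $a_d$ colors. I will actually prove the sharper statement that the adversary wins even when each vertex at height $t$ in $T$ has only $a_{t+1}$ colors; since $a_d$ is the largest of these values, the main claim then follows immediately.

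Fix any two-guess strategy $\Gamma$ on $\cl(T)$ with these color counts. The structural fact I rely on is that every edge of $\cl(T)$ joins an ancestor--descendant pair in $T$, so the neighbors of a vertex $v$ at height $t$ consist precisely of its $t$ proper ancestors in $T$ and its descendants in $T$. I construct a hat assignment $\psi$ on $V(T)\setminus\{r\}$ by processing the non-root vertices in decreasing order of height (deepest first). When it is $v$'s turn, every descendant of $v$ already has a color under $\psi$, so $\Gamma_v$ depends only on the still-unknown colors of $v$'s $t$ ancestors; ranging over all $\prod_{s=0}^{t-1} a_{s+1}=\prod_{s=1}^{t} a_s$ ancestor colorings and the two guesses per coloring, $v$ could in principle produce at most $2\prod_{s=1}^{t} a_s$ distinct guesses.

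Using $a_0=1$, the defining recurrence rearranges to
\[
a_{t+1} \;=\; 1+2\prod_{s=0}^{t} a_s \;=\; 1+2\prod_{s=1}^{t} a_s,
\]
so $a_{t+1}$ strictly exceeds the number of potential guesses at $v$, and at least one color in $[a_{t+1}]$ is safe; choose such a color for $\psi(v)$. After every non-root vertex has been processed this way, $\psi$ has the following universal property: whatever color $c\in[a_1]=\{1,2,3\}$ is eventually placed on the root, every non-root player fails both of its guesses. The root's two guesses themselves depend only on the colors of its descendants, which are all fixed by $\psi$, so they are two determined colors in $\{1,2,3\}$; the adversary assigns to $r$ any of the at least one unguessed color. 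Every player now fails, so $\HG_2(\cl(T))<a_d$.

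The hard part is getting the universal $\psi$ to exist, and this hinges entirely on the bottom-up bookkeeping: one must check that when $v$ is colored, its descendants are frozen, its ancestors (including the root) vary over the product $\prod_{s=1}^{t} a_s$ indexed by their heights, and the recurrence exactly matches $a_{t+1}-1=2\prod_{s=1}^{t} a_s$. Once this count is in place, the argument is a direct adaptation of He and Li's single-guess proof, with every use of Sylvester's recurrence $s_{n+1}=1+\prod_{i=0}^n s_i$ replaced by $a_{n+1}=1+2\prod_{i=0}^n a_i$; the factor of two is precisely the cost of the extra guess per player.
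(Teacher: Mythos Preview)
Your proof is correct and takes essentially the same approach as the paper. The paper phrases the argument as an induction on $|V(T)|$ that strips one leaf at a time (so the processed vertex never has descendants at all), whereas you fix a single deepest-first ordering and rely on descendants already being colored; both reduce to the same count $a_{t+1}=1+2\prod_{s\le t}a_s$ against the ancestors, which is exactly the replacement of Sylvester's recurrence by its two-guess analogue.
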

\begin{proof}
Again, to simplify notation,  we let $s$ be fixed, and we write $a_t = a_{s,t}$ for all $t \geq 0$. 
We aim to show that if $T$ is a rooted tree of height $d-1$, then $\HG_s(\cl(T)) < a_d$. In order to do this, we will actually prove a stronger result.
We will show by induction on $|V(T)|$ that the adversary has a winning hat assignment on $\cl(T)$ as long as there are $a_{t+1}$ colors available at each vertex of height $t$, for each $t \geq 0$. Since the vertices of $T$ achieve a height of at most $d-1$, this is enough to prove the theorem.

If $|V(T)| = 1$, then $T = \cl(T)$ has a single vertex $r$ at height $0$. Since $a_1 = s+1$ and since $r$ only has $s$ guesses, the adversary can assign a color to $r$ that is not guessed and win the game. 

Next, suppose that $|V(T)| > 1$. Assume that a hat guessing strategy is fixed on $\cl(T)$. For each $t \geq 0$, we fix a list of $a_{t+1}$ colors at each vertex of $T$ at height $t$. Now, let $v$ be a leaf of $T$ at height $k \geq 1$. We write $u_1, \dots, u_k$ for the descendants of $v$ at heights $0, \dots, k-1$, respectively. Since $v$ is adjacent in $\cl(T)$ exactly to $u_1, \dots, u_k$, and since the total number of colorings of the set $\{u_1, \dots, u_k\}$ is at most $P:=a_1 \dots a_{k}$, each of which determines $s$ guesses at $v$, it follows that the $s$ guesses made at $v$ comes from a set of at most $sP$ colors. However, since the adversary has $a_{k+1} = 1 + sP$ colors available at $v$, the adversary can assign $v$ a color $\gamma$ that $v$ will not guess. Hence, the adversary colors $v$ with $\gamma$, and $v$ does not guess its hat color correctly. 

Now, with $\gamma$ fixed at $v$, the players have a unique hat guessing strategy on $\cl(T \setminus \{v\})$. However, by the induction hypothesis, the adversary has a winning hat assignment on $\cl(T \setminus \{v\})$ with the available colors at each vertex. Therefore, the adversary can complete a winning coloring on $V(T)$ and win the game, and it follows that $\HG_s(\cl(T)) < a_d$.
\end{proof}

With all of these tools in place, we can now finish the proof of Theorem \ref{thm:circ}.
Let $B$ be a block of $G$. Since $B$ is two-connected, the treedepth of $B$ is at most $\lfloor \frac{1}{2}c^2 \rfloor$ by Lemma \ref{lem:td}. Therefore, by Theorem \ref{thm:td}, $\HG_2(B) < a_{2,d}$, with $d = \lfloor \frac{1}{2}c^2 \rfloor$. Then, by applying Lemma \ref{lem:blocks}, we know that $\HG(G) < a_{2,d}$. Then, by Lemma \ref{lem:theta},
$a_{2,d} < (\theta_2 )^{2^{d-1}} < \left (\frac{64}{25} \right )^{2^{d-1}}$, and 
 the proof is complete.

\section{Open questions}
While we showed that the graphs that forbid a tree $T$ as a subgraph have bounded hat guessing number, it is still open whether the graphs that forbid a general subgraph $H$ as a subgraph have bounded hat guessing number. We suspect that 
if $H$ contains a cycle, then there exists a graph $G$ with no subgraph isomorphic to $H$ for which the hat guessing number of $G$ is arbitrarily large, and we pose the following question:
\begin{question}
\label{q:H}
Is it true that the graphs with $H$ as forbidden subgraph have bounded hat guessing number if and only if $H$ is acyclic?
\end{question}
If Question \ref{q:H} has a positive answer, then it would be analagous to the well-known fact that the graphs that forbid $H$ as a subgraph have bounded chromatic number if and only if $H$ is acyclic (see e.g.~\cite{Axenovich}).
One possible way to settle Question \ref{q:H} would be to show an affirmative answer to the following question, which was asked by He, Ido, and Przybocki \cite{HeWindmill}:
\begin{question}
\label{q:girth}
Do there exist graphs with arbitrarily large girth and hat guessing number?
\end{question}
If Question \ref{q:girth} has an affirmative answer, then for any graph $H$ with at least one cycle, one can construct a family of graphs with no copy of $H$ as a subgraph and with arbitrarily large hat guessing number by considering those graphs $G$ with girth larger than the girth of $H$, which would give a positive answer to Question \ref{q:H}. 

On the other hand, if Question \ref{q:girth} has a negative answer, it would be natural to ask if any particular cycle length is necessary in a graph family of unbounded hat guessing number. For instance, must a graph of sufficiently large hat guessing number contain a $4$-cycle? Gadouleau and Georgiou \cite{GadouleauHats} have shown that $\HG(K_{n,n^n}) > n$, so evidently no odd cycle is necessary in a family of graphs of unbounded hat guessing number, but no construction exists for graphs of arbitrarily large hat guessing number that contain no $4$-cycle. In fact, based on current knowledge, graphs of sufficiently large hat guessing number may necessarily contain a cycle of every even length up to some value.

\bibliographystyle{plain}
\bibliography{HatBib}

\end{document}